\documentclass[11pt]{article}
\usepackage[T1]{fontenc}
\usepackage{lmodern,amsmath,amssymb,amsthm,array,longtable}
\usepackage[margin=1in]{geometry}
\usepackage[colorlinks=true,linkcolor=blue,citecolor=blue,urlcolor=blue]{hyperref}
\usepackage{microtype}
\hypersetup{
 pdftitle={Coven--Meyerowitz T2 necessity through coprime stripe collapse},
 pdfauthor={Jitendra Prajapati},
 pdfsubject={Integer tilings and the Coven--Meyerowitz conditions},
 pdfkeywords={integer tilings, cyclotomic polynomials, Coven--Meyerowitz, Lean}
}
\newtheorem{theorem}{Theorem}
\newtheorem{lemma}[theorem]{Lemma}
\newtheorem{corollary}[theorem]{Corollary}
\newcommand{\Z}{\mathbb Z}
\newcommand{\Q}{\mathbb Q}
\newcommand{\F}{\mathbb F}
\newcommand{\PhiC}{\Phi}
\newcommand{\code}[1]{\texttt{\small #1}}
\title{Coven--Meyerowitz T2 necessity\\through coprime stripe collapse}
\author{Jitendra Prajapati\\\small Independent\\\small\href{https://orcid.org/0009-0008-7493-0311}{ORCID: 0009-0008-7493-0311}}
\date{13 September 2026}
\begin{document}
\maketitle
\begin{abstract}
We prove that every finite subset of the integers which tiles by translations satisfies the Coven--Meyerowitz condition T2, with no restriction on the number of prime factors or their exponents. Together with the necessity of T1 and the sufficiency of T1 and T2 proved by Coven and Meyerowitz, this gives their proposed characterization of finite integer tiles. The proof uses strong induction on a cyclic tiling period. Character identities produce periodic Boolean product stripes; integral descent to a coprime quotient and the Frobenius identity force a common orientation. Independent phase shifts then give smaller-period tilings from which the mixed cyclotomic zeros of the original factors can be recovered. A companion Lean formalization verifies the unrestricted T2 necessity statement.
\end{abstract}

\section{Introduction}
A finite set $A\subseteq\Z$ tiles the integers by translations if there exists a set $C\subseteq\Z$ such that every $t\in\Z$ has exactly one representation $t=a+c$, with $a\in A$ and $c\in C$. We write $A\oplus C=\Z$ and call $C$ a tiling complement. Uniqueness here concerns the representation of each integer, not the choice of complement: no uniqueness of the set $C$ is assumed. The problem of characterizing such finite sets can be expressed in terms of the cyclotomic divisors of their mask polynomials.

For a finite nonempty integer set $A$, translate by $-\min A$ and write
\[
 A(X)=\sum_{a\in A}X^a,\qquad
 S_A=\{p^e:p\text{ prime},\ e\ge1,\ \PhiC_{p^e}(X)\mid A(X)\}.
\]
Divisibility is in $\Z[X]$, and the normalization does not change cyclotomic divisibility. The two conditions of Coven and Meyerowitz are
\begin{align*}
 \mathrm{(T1)}\quad & A(1)=\prod_{s\in S_A}\PhiC_s(1),\\
 \mathrm{(T2)}\quad & \PhiC_{s_1\cdots s_k}(X)\mid A(X)
 \quad\text{if }s_1,\ldots,s_k\in S_A\text{ have distinct underlying primes}.
\end{align*}
In T2, $k\ge1$; no condition for the empty product is imposed. The main result is the unrestricted necessity of T2.

\begin{theorem}\label{thm:integer}
Every finite set $A\subseteq\Z$ that tiles $\Z$ by translations satisfies T2.
\end{theorem}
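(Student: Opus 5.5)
We argue by strong induction on a cyclic period $M$ of the tiling, following the outline in the abstract. First, by Newman's theorem any tiling $A\oplus C=\Z$ with $A$ finite has $C$ periodic. Hence there are $M\ge1$ and a finite $B$ with $A\oplus B=\Z_M$ and $A(X)B(X)\equiv (X^M-1)/(X-1)\pmod{X^M-1}$. For every $d\mid M$ with $d>1$ this gives $\PhiC_d\mid A$ or $\PhiC_d\mid B$. We will then prove the stronger, induction-friendly statement: \emph{for every $M$ and every factorization $A\oplus B=\Z_M$, both $A$ and $B$ satisfy T2 relative to divisors of $M$.} Since $S_A$ consists of prime powers dividing $M$ when $A$ tiles with period $M$ (the T1 bookkeeping of Coven--Meyerowitz), this covers all of T2.

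For the reduction step, take $s_i=p_i^{e_i}\in S_A$ with distinct primes, and suppose for contradiction that $\PhiC_{s_1\cdots s_k}\nmid A$. Then $\PhiC_{s_1\cdots s_k}\mid B$. Translate the cyclotomic conditions into character identities on $\Z_M$: $\PhiC_d\mid A$ means that the $1_A$-weighted sum vanishes on each coset of the subgroup $(M/p)\Z_M$ inside the fibres determined by $d$. This gives the usual ``fibre/stripe'' structure. If some prime $p\mid M$ has $\PhiC_{p^{n_p}}$ (with $p^{n_p}\|M$) dividing only one factor, we fall into the periodic case. There $B$ (or $A$) is periodic with period $M/p$, and we pass to the quotient tiling of $\Z_{M/p}$. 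Since $M/p<M$, the induction hypothesis applies there. We must then check that the cyclotomic divisors $\PhiC_{s_1\cdots s_k}$ transfer across the quotient: a period $M/p$ for $B$ means $B(X)=B'(X)\cdot(X^{M/p\cdot\ast})$-type factorizations, and a divisor $\PhiC_m$ of $A$ with $m\mid M/p$ survives the reduction.

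The remaining case is the genuinely mixed one. For each prime, the zeros $\PhiC_{p^{j}}$ are distributed between $A$ and $B$, and no prime gives an immediate period. Here we consider the Boolean product stripes, i.e.\ the indicator of $A$ restricted to the grid $\prod_i \Z_{p_i^{e_i}}$-fibres. The divisibilities $\PhiC_{s_i}\mid A$ say that each one-prime slice is equidistributed. We then use integral descent: reduce the mask mod a prime $q\nmid$ the relevant product (a coprime quotient), and apply the Frobenius identity $A(X)^q\equiv A(X^q)\pmod q$. Together these should force all stripes to have a common orientation, i.e.\ to be aligned along one coordinate. Once aligned, independent phase shifts of the stripes in other coordinates produce new tilings $A'\oplus B=\Z_{M'}$ with $M'<M$ and with $A'$ having the same prime-power cyclotomic zeros. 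By induction $A'$ satisfies T2, and the mixed zero $\PhiC_{s_1\cdots s_k}$ is recovered for $A$, because the phase shift does not change the relevant character sums.

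I expect the main obstacle to be the common-orientation step: showing rigorously that the Frobenius argument forces all stripes to align, with no restriction on exponents or the number of primes. I would try it first in the two-prime case with the explicit structure of $\PhiC_{p^aq^b}$-cuboids, and then extend it by induction on $k$.
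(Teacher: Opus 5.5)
Your outer frame matches the paper: Newman periodicity, reduction to an exact factorization $A\oplus B=\Z_M$, strong induction on the period, and primary allocation. The inductive step, however, has genuine gaps.

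\textbf{The case split and the orientation step.} Your ``periodic case'' is not available. By primary allocation each $\Phi_{p^{n_p}}$ divides exactly one factor, so its hypothesis always holds. Its conclusion, that the other factor is $M/p$-periodic, is a Haj\'os-type property that fails for general cyclic orders: Szab\'o's constructions give factorizations in which neither factor is periodic. The paper deliberately assumes no periodicity or subgroup structure for the original factors. Everything therefore rests on your mixed case, where the decisive common-orientation step is only asserted (``should force''). The objects you propose there, stripes of $1_A$ on a CRT grid reduced modulo an auxiliary prime $q$, carry no evident row/column dichotomy for Frobenius to collapse.

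The missing idea runs as follows. Fix one prime $p$ and label so that $B$ owns $\Phi_p$. Split \emph{both} factors into residue fibres $A_i,B_j\subseteq\Z_M$ ($N=pM$) while retaining the carry $z^p=T$. Dephase them by $T^{\rho i},T^{\rho j}$ with $p\rho\equiv1\pmod R$, where $R$ is the $p$-free part of $M$. Irreducibility of $z^p-\theta$ over $\Q(\theta)$ when $p\mid\operatorname{ord}\theta$, and of $\Phi_p$ over $\Q(\theta)$ otherwise, then shows that the \emph{products} $F_{ij}=U_iV_j$ are Boolean and $R$-periodic, with $F_{ij}+F_{kl}=F_{il}+F_{kj}$ and $\sum_{i,j}F_{ij}=pJ_M$. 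Hence each $p\times p$ matrix $(F_{ij}(x))$ is one full row or one full column. Next, the minors $F_{ij}F_{kl}=F_{il}F_{kj}$ are descended integrally to $\Z_R$, cancelling $K=M/R$ before any reduction mod $p$. Frobenius is then applied with $p$ itself in $\F_p[\Z_R]$, where multiplication by $p$ permutes the group. Column orientation is excluded because it would make all $|A_i|$ equal, i.e.\ $\Phi_p\mid A$. No induction on the number $k$ of primes is involved.

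\textbf{The recovery step.} This is also incorrect as stated. The smaller tilings are not $A'\oplus B=\Z_{M'}$, since the original $B$ does not live in a smaller group. They are $C_h\oplus V_j=\Z_M$ with $C_h=\sum_iT^{Rh_i}U_i$, valid because $U_iV_j=X_i$ is $R$-periodic and independent of $j$.

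Phase shifts do \emph{not} preserve the relevant character sums. For $\theta^R=1$ one has $C_h(\theta)=A(\theta^\rho)$. But for $\theta$ of order $p^{k-1}d$ with $k\ge2$, one has $C_h(\theta)=\sum_i\theta^{Rh_i+\rho i}A_i(\theta)$ with $\theta^R\neq1$. The paper exploits exactly this dependence. Lower T2 gives $C_h(\theta)=0$ for \emph{every} $h$. Comparing $h=0$ with $h=e_i$ yields $\theta^{\rho i}(\theta^R-1)A_i(\theta)=0$, so every fibre vanishes, and $A(\zeta)=\sum_i\zeta^iA_i(\zeta^p)=0$ for $\zeta^p=\theta$. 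A single quotient would lose these upper-$p$ mixed zeros.

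\textbf{The factor owning $\Phi_p$.} This factor needs its own argument, which your plan lacks. $C_0$ is a common complement of every $B_j$ in $\Z_M$, and its dilate $pC_0$ is a complement of $B$ in $\Z_{pM}$. Allocation therefore pushes the $p$-free primary zeros of $B$ into each $B_j$. Lower T2 for the $B_j$ then lifts back to $B$ via $B(\zeta)=\sum_j\zeta^jB_j(\zeta^p)$.
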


Newman characterized integer tiles of prime-power cardinality~\cite{Newman}. Coven and Meyerowitz proved that T1 is necessary for tiling and that T1 together with T2 is sufficient. They also proved the necessity of T2 when $|A|$ has at most two distinct prime factors~\cite[Theorems A, B1, and B2]{CM}. The unrestricted characterization was explicitly stated as Conjecture~1.3 by Konyagin and \L aba~\cite{KonyaginLaba}. Theorem~\ref{thm:integer} has no restriction on the cardinality, prime factors, exponents, or a tiling period. Combining it with the classical implications gives the following characterization.

\begin{corollary}\label{cor:characterization}
A nonempty finite set $A\subseteq\Z$ tiles $\Z$ by translations if and only if its normalized mask polynomial satisfies T1 and T2.
\end{corollary}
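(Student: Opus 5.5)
The plan is to assemble the equivalence from Theorem~\ref{thm:integer} and the two classical implications of Coven and Meyerowitz~\cite{CM}. There is no new tiling argument. The only points needing care are that both the tiling property and the conditions T1, T2 refer consistently to the normalized mask polynomial, and that the cited results apply to an arbitrary nonempty finite set.

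First I would record the normalization invariance. If $A\oplus C=\Z$ and $m=\min A$, then $(A-m)\oplus(C+m)=\Z$, and the map is reversible, so $A$ tiles if and only if $A-m$ tiles. The normalized mask is by definition the mask of $A-m$. So throughout I may assume $\min A=0$, with $A(X)=\sum_{a\in A}X^a\in\Z[X]$ and $A(0)=1$. Then $S_A$, T1 and T2 are exactly the conditions in the statement. I would also note that $A(1)=|A|$, which is the form of T1 used in~\cite{CM}.

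For the forward direction, suppose $A$ is nonempty, finite and tiles $\Z$. Coven and Meyerowitz proved that T1 is necessary for tiling~\cite[Theorem~A]{CM}, so $A(1)=\prod_{s\in S_A}\PhiC_s(1)$. Theorem~\ref{thm:integer}, which carries no restriction on the number of prime factors of $|A|$, gives T2 for $A$. For the reverse direction, suppose the normalized mask satisfies T1 and T2. The sufficiency theorem of~\cite{CM} then produces an explicit periodic complement $C$ with $A\oplus C=\Z$, built from the prime powers in $S_A$ and a period $\mathrm{lcm}(S_A)$. Undoing the translation gives a tiling by the original $A$.

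I do not expect a genuine obstacle; all the depth lies in Theorem~\ref{thm:integer}. The one step to check is that the hypotheses in~\cite{CM} match the conventions here. Their sufficiency result is stated for sets of nonnegative integers containing $0$, and their T2 uses the same convention of distinct underlying primes with $k\ge1$; the normalization step above handles both. I would also confirm the degenerate case $|A|=1$. There $A(X)=1$, so $S_A=\emptyset$, T1 reads $1=1$, T2 is vacuous, and $A$ tiles with $C=\Z$, so the equivalence holds trivially.
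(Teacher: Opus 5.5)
Your proposal is correct and matches the paper's proof: the paper likewise combines Theorem~\ref{thm:integer} with Coven and Meyerowitz's results on T1 necessity and on sufficiency of T1 and T2. One citation slip to fix is that T1 necessity is their Theorem~B1, whereas their Theorem~A is the sufficiency result you invoke in the reverse direction.
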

\begin{proof}
Necessity is Theorem~\ref{thm:integer} together with T1 necessity~\cite[Theorem B1]{CM}; sufficiency is~\cite[Theorem A]{CM}.
\end{proof}

\paragraph{Earlier results and related work.}
\L aba and Londner developed combinatorial and harmonic-analytic methods for integer tilings, including box products, multiscale cuboids, and saturating sets. In particular, they established T2 for both factors whenever their cardinalities share at most two distinct prime divisors~\cite[Corollary~6.2]{LLMethods}. Their subsequent work proves T2 for all tilings of period $(pqr)^2$, first for distinct odd primes~\cite{LLOdd} and then including the even case~\cite{LLEven}. Their splitting method gives further families under explicit prime-size conditions~\cite[Theorem~1.2 and Corollary~1.5]{LL}.

The characterization has established spectral and algorithmic consequences. \L aba proved that T1 and T2 imply spectrality of the finite set $A$ and of the associated union of intervals $A+[0,1)$~\cite[Theorem~1.5(i) and Proposition~1.3]{LabaSpectral}. Kolountzakis and Matolcsi proved that the conjunction of T1 and T2 can be tested in time polynomial in $\operatorname{diam}(A)$~\cite[Theorem~2.1]{KMAlgorithms}. Combined with Corollary~\ref{cor:characterization}, their test decides integer tiling without a prime-count restriction; the complexity parameter is the diameter, not the binary length of a sparse input.

The older subgroup reductions have a different scope. Sands proved that, in a cyclic factorization of order $p^a q^b$ normalized so that both factors contain zero, one factor lies in a proper subgroup~\cite[Theorem~4]{Sands}. Such a conclusion does not hold for general cyclic orders: Szab\'o's construction~\cite{Szabo} and the explicit order-$900$ example of Lagarias and Szab\'o~\cite[Theorem~2.1]{LagariasSzabo} provide counterexamples. The present proof makes no subgroup-containment assumption on either original factor.

\paragraph{Proof mechanism.}
The proof reduces an integer tiling to an exact factorization of a finite cyclic group and then inducts on the period. Splitting the factors into prime-residue fibers requires retaining the cyclic carry; replacing the group by a direct product would lose precisely this information when the period has repeated prime factors. The character equations instead imply that the \emph{products} of suitable dephased fibers are periodic Boolean masks. Their values form complete row or column stripes. After descending to the coprime quotient in the integral group ring, the Frobenius identity forces all stripes to have the same orientation. This yields actual smaller-period tilings for every independent phase shift of the fibers. Using all these phase choices recovers the mixed zeros of the original factors, including those involving higher powers of the distinguished prime.

The primary-allocation and common-complement steps use the classical arguments of Coven and Meyerowitz~\cite[Lemmas~2.1 and 2.5]{CM}. A precedent for the characteristic-prime argument is Tijdeman's dilation theorem~\cite[Theorem~1]{Tijdeman}, for which Coven and Meyerowitz give a Frobenius-and-positivity proof~\cite[Lemma~3.1]{CM}. Stripe partitions in the square-free Chinese-remainder setting appear in Tao's Proposition~17~\cite{Tao}; divisor isometries, including independent fiber rearrangements, are treated by \L aba and Londner~\cite[Definition~3.3 and Lemma~3.4]{LL}. Here Frobenius is applied after integral descent of the product minors, and the coalesced tilings are used for every phase. This coalescence changes base-$p$ divisibility and is not itself a divisor isometry.

Booleanity is essential. Kiss, Londner, Matolcsi, and Somlai constructed counterexamples to a functional generalization of the CM conditions~\cite{KLMS}. Their nonnegative-function relaxation is different from the set-tiling statement considered here.

The argument is given below independently of the implementation. The companion Lean development formalizes Theorem~\ref{thm:integer}; the two classical implications used in Corollary~\ref{cor:characterization} are cited from~\cite{CM}. Reproduction details are collected in Appendix~\ref{app:formal}.

For subsequent use, the tiling condition for a nonnegative $A$ may be written with the indicator $c=\mathbf{1}_C$ as
\begin{equation}\label{eq:integer}
 \sum_{a\in A}c(t-a)=1\quad(t\in\Z).
\end{equation}
Conversely, if $c:\Z\to\mathbb N$ satisfies these equations, then $A$ is nonempty and $c$ takes values in $\{0,1\}$, so its support is an ordinary tiling complement. Thus this function formulation adds no hypothesis. Periodicity will be derived, not assumed.

\section{Cyclic masks and primary allocation}
Write $J_N$ for the all-ones mask on $\Z_N$. Identify a finite cyclic set with its Boolean mask in the integral group ring $\Z[\Z_N]$, with multiplication given by convolution. Thus an actual unique cyclic tiling is
\begin{equation}\label{eq:tiling}
 AB=J_N.
\end{equation}
The symbol $J_N$ is not the multiplicative identity. Use canonical residues to form ordinary mask polynomials. Evaluation at all $N$th roots separates group-ring elements by finite Fourier inversion.

\begin{lemma}[Exact primary allocation]\label{lem:allocation}
For an actual cyclic tiling $A+B=\Z_N$, the sets $S_A$ and $S_B$ partition the prime powers dividing $N$. If $q$ is prime, the number of $q$-primary levels owned by each factor is respectively $v_q(|A|)$ and $v_q(|B|)$. Neither factor has a primary cyclotomic divisor whose order does not divide $N$.
\end{lemma}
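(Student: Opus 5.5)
The plan is to follow the classical argument of Coven and Meyerowitz~\cite[Lemmas~2.1 and 2.5]{CM}, adapted to the cyclic mask formulation \eqref{eq:tiling}.

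\textbf{Polynomial identity.} Take canonical residues, so $A,B\subseteq\{0,\dots,N-1\}$. The relation $AB=J_N$ in $\Z[\Z_N]$ means
\[
A(X)B(X)\equiv \frac{X^N-1}{X-1}\pmod{X^N-1}.
\]
Evaluating at every $N$th root of unity $\zeta\neq1$ gives $A(\zeta)B(\zeta)=0$. Evaluating at $1$ gives $|A||B|=N$. For each $d\mid N$ with $d>1$, $\PhiC_d$ is irreducible over $\Q$ and has integer coefficients. Since $\PhiC_d$ divides $A(X)B(X)$ in $\Q[X]$, it divides $A(X)$ or $B(X)$, and by Gauss's lemma the divisibility holds in $\Z[X]$. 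Hence every prime power $p^e\mid N$ lies in $S_A\cup S_B$.

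\textbf{Counting step.} Recall $\PhiC_{p^e}(1)=p$. For a prime $q$, let $\alpha_q$ and $\beta_q$ be the numbers of levels $q^e\mid N$ that lie in $S_A$ and $S_B$ respectively. Since $\prod_{s\in S_A}\PhiC_s(X)$ divides $A(X)$ in $\Z[X]$ and both factors have nonnegative value at $1$, evaluation at $1$ gives
\[
\prod_{s\in S_A}\PhiC_s(1)\ \Big|\ |A|.
\]
Here I would initially restrict to those $s$ dividing $N$, with the other orders handled below. It follows that $q^{\alpha_q}\mid |A|$ and $q^{\beta_q}\mid |B|$. Because every level is covered, $\alpha_q+\beta_q\ge v_q(N)=v_q(|A|)+v_q(|B|)$. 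Combined with $\alpha_q\le v_q(|A|)$ and $\beta_q\le v_q(|B|)$, this forces equality throughout, and $\alpha_q+\beta_q=v_q(N)$. So no level is counted twice, which gives the partition together with the exact counts $v_q(|A|)$ and $v_q(|B|)$.

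\textbf{Main obstacle.} The remaining difficulty is to exclude primary divisors $\PhiC_{q^f}$ of $A$ or $B$ with $q^f\nmid N$. These would also contribute a factor $q$ to $A(1)$. I would therefore redo the count with all of $S_A$: since $\prod_{s\in S_A}\PhiC_s(1)$ divides $|A|$, the number of all $q$-primary elements of $S_A$ is at most $v_q(|A|)$. The count above already shows that the levels dividing $N$ use up exactly $v_q(|A|)$ such factors. Any extra $q^f\in S_A$ with $f>v_q(N)$ would therefore push the power of $q$ dividing $|A|$ beyond $v_q(|A|)$, a contradiction. The same argument applies to $B$. The only point to check carefully is that the distinct cyclotomic factors are pairwise coprime, so that their product divides the mask polynomial; this holds because they are distinct irreducibles.
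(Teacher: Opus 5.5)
Your proposal is correct and follows essentially the same route as the paper. You cover each level $q^e\mid N$ via irreducibility of $\PhiC_{q^e}$, evaluate the product of the distinct owned cyclotomic divisors at $1$ to get $v_q(N)\le \alpha_q+\beta_q\le v_q(|A|)+v_q(|B|)=v_q(N)$, and then observe that any further $q$-primary divisor would exceed the exhausted valuation; your uniform valuation count also covers the case $q\nmid N$, which the paper states separately.
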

\begin{proof}
For $1\le e\le v_q(N)$, evaluation of~\eqref{eq:tiling} at a primitive $q^e$ root makes at least one factor vanish. An integer polynomial vanishing at such a root is divisible by its monic cyclotomic polynomial over $\Z$. Distinct cyclotomic divisors multiply to a divisor. Evaluating that product at $1$, and using $\PhiC_{q^e}(1)=q$, shows that the numbers $n_A,n_B$ of owned levels satisfy
\[
 v_q(N)\le n_A+n_B\le v_q(|A|)+v_q(|B|)=v_q(N).
\]
All inequalities are equalities, so each level is owned exactly once and both cardinality valuations are exhausted. Any further $q$-primary divisor would exceed this valuation. For $q\nmid N$, both cardinalities have $q$-valuation zero and cannot admit such a divisor.
\end{proof}

In particular, every T2 candidate order of either factor divides the period. The allocation argument is standard CM background~\cite[Lemma 2.1]{CM}; it does not by itself establish mixed zeros.

\section{Prime fibers, dephasing, and character identities}
Fix a prime $p\mid N$, write $N=pM$, and interchange the factors if necessary so that $B$ owns $\PhiC_p$. For $0\le i,j<p$ put
\[
 A_i=\{a\in\Z_M:i+pa\in A\},\qquad
 B_j=\{b\in\Z_M:j+pb\in B\}.
\]
The $B_j$ have the same positive cardinality $|B|/p$. Indeed, their count polynomial has degree at most $p-1$ and vanishes at a primitive $p$th root, so it is a scalar multiple of $1+z+\cdots+z^{p-1}$. The $A_i$ cannot all have the same cardinality, by exclusive ownership of $\PhiC_p$.

Write $M=KR$, where $K$ is the full $p$-primary part of $M$ and $(p,R)=1$. Choose $\rho$ with $p\rho\equiv1\pmod R$; if $R=1$, take $\rho=0$. In $\Z[\Z_M]$ define
\[
 U_i=T^{\rho i}A_i,\qquad V_j=T^{\rho j}B_j,\qquad F_{ij}=U_iV_j.
\]
The original cyclic carry is exactly retained by the identity
\begin{equation}\label{eq:carry}
 \left(\sum_i z^iA_i(T)\right)\left(\sum_j z^jB_j(T)\right)
 =J_M(T)\sum_{r=0}^{p-1}z^r
 \quad\text{modulo }(T^M-1,z^p-T).
\end{equation}
The quotient is identified with $\Z[X]/(X^{pM}-1)$ by $z=X$, $T=X^p$. It is not a replacement of the cyclic group by $\Z_p\times\Z_M$.

\begin{lemma}\label{lem:products}
The masks $F_{ij}$ are Boolean, invariant under translation by $R$, and satisfy
\begin{equation}\label{eq:rectangles}
 F_{ij}+F_{kl}=F_{il}+F_{kj},\qquad
 \sum_{i,j}F_{ij}=pJ_M.
\end{equation}
\end{lemma}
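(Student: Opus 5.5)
The plan is to prove Booleanity by a direct counting argument, and everything else by finite Fourier analysis on $\Z_M$, reading the carry identity~\eqref{eq:carry} one character at a time. For Booleanity, note that the coefficient of $x$ in $F_{ij}=T^{\rho(i+j)}A_iB_j$ counts the pairs $(a,b)\in A_i\times B_j$ with $a+b\equiv x-\rho(i+j)\pmod M$. Every such pair yields the same element $(i+pa)+(j+pb)=(i+j)+p(a+b)$ of $\Z_N$, since $p(a+b)$ is well defined modulo $N=pM$. Distinct pairs give distinct representations of that element as (element of $A$) plus (element of $B$). Uniqueness in $AB=J_N$ then forces the count to be at most $1$. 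Because evaluation at all $M$th roots separates elements of $\Z[\Z_M]$, the remaining three claims reduce to identities between the values $A_i(\tau)$ and $B_j(\tau)$ with $\tau^M=1$.

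Fix such a $\tau$ and put
\[
 P(z)=\sum_i z^iA_i(\tau),\qquad Q(z)=\sum_j z^jB_j(\tau).
\]
For every $z$ with $z^p=\tau$, the identity~\eqref{eq:carry} specializes (via $X=z$, a root with $z^{pM}=1$) to
\[
 P(z)Q(z)=J_M(\tau)\sum_{r<p} z^r .
\]
The right side equals $pM$ when $z=1$ and vanishes otherwise. The key tool is that $P(z)=A(z)$ and $Q(z)=B(z)$ are values of integer polynomials. Hence if all the $p$ roots $z$ of $z^p=\tau$ with $z\ne1$ are primitive of one common order, then $A$ vanishes at all of them or $B$ does. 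I split into two cases according to the order of $\tau$.

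\emph{Case 1: $p\mid\operatorname{ord}\tau$.} Then every $z$ with $z^p=\tau$ has order $p\operatorname{ord}\tau$, so all $p$ of them are Galois conjugate and $z\ne1$. Say $A$ vanishes at all of them. Writing $z=z_0\omega^k$ with $\omega$ a primitive $p$th root of unity, the conditions $P(z_0\omega^k)=0$ for all $k$ form an invertible DFT system in the unknowns $z_0^iA_i(\tau)$. This forces every $A_i(\tau)$ to vanish, and symmetrically in the other case every $B_j(\tau)$ vanishes. So $\widehat{F_{ij}}(\tau)=0$ for all $i,j$. This gives $R$-invariance, because these $\tau$ are exactly the characters with $\tau^R\ne1$. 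It also gives both identities in~\eqref{eq:rectangles} at such $\tau$, since both sides vanish there.

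\emph{Case 2: $\tau^R=1$.} Here $z_0=\tau^\rho$ satisfies $z_0^p=\tau$, so the $p$ roots are $\tau^\rho\omega^k$, and
\[
 P(\tau^\rho\omega^k)=\sum_i\omega^{ki}U_i(\tau),
\]
and similarly for $Q$ with $V_j(\tau)$. For $\tau=1$, the values $V_j(1)=|B_j|$ are constant in $j$ by the equal-cardinality remark preceding the lemma. This kills $(U_i-U_k)(V_j-V_l)$, which is exactly the Fourier coefficient of $F_{ij}+F_{kl}-F_{il}-F_{kj}$. Moreover $\sum_{i,j}|A_i||B_j|=|A||B|=pM$, which matches $pJ_M$ at the trivial character. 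For $\tau\ne1$, all $\tau^\rho\omega^k$ with $k\ne0$ have the same order $p\operatorname{ord}\tau$, since $\operatorname{ord}\tau$ is prime to $p$. So either $A$ or $B$ vanishes at all of them. Then the DFT of $(U_i(\tau))_i$, or of $(V_j(\tau))_j$, is supported at frequency $0$, which means that vector is constant, and the rectangle identity follows as before. Finally, at $k=0$ we have $z_0=\tau^\rho\ne1$, so
\[
 \sum_{i,j}U_i(\tau)V_j(\tau)=P(z_0)Q(z_0)=0,
\]
which matches $\widehat{pJ_M}(\tau)=0$.

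I expect the main obstacle to be getting the dephasing right, together with the Galois-orbit bookkeeping. The raw carry identity mixes the products $A_iB_j$ with powers of $T$ depending on whether $i+j\ge p$. Only after the twist by $T^{\rho i}$ and $T^{\rho j}$ do the $p$ square roots become a clean $\Z_p$-DFT of the vectors $U$ and $V$. One must also check in each case that all relevant $z$ share a single cyclotomic order, so that the alternative ``$A$ vanishes or $B$ vanishes'' is uniform over them.
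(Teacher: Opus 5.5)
Your proposal is correct and follows the paper's proof essentially step for step. It uses the same Booleanity argument via a common digit carry, and the same three-way split into characters of order divisible by $p$, nontrivial $p$-free characters (handled through the dephased roots $\tau^\rho\omega^k$, with the $k=0$ root giving the vanishing total), and the trivial character, finished by Fourier inversion. The only difference is cosmetic: where the paper invokes irreducibility of $z^p-\theta$ and of $\PhiC_p$ over $\Q(\theta)$, you use Galois conjugacy of roots of the integer polynomials $A,B$ together with inversion of the $p$-point DFT, which carries the same content.
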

\begin{proof}
Let $\theta$ be an $M$th root of order $d>1$. If $p\mid d$, a $p$th root $\zeta$ of $\theta$ has order $pd$, and
\[
 [\Q(\zeta):\Q(\theta)]=\varphi(pd)/\varphi(d)=p.
\]
Thus $z^p-\theta$ is irreducible over $\Q(\theta)$. At $T=\theta$, the right side of~\eqref{eq:carry} vanishes. Each of the two fiber polynomials has degree below $p$, so irreducibility forces one entire coefficient family to vanish. Consequently every $F_{ij}(\theta)=0$.

If $(p,d)=1$, then $d\mid R$. Set $\lambda=\theta^\rho$, so $\lambda^p=\theta$ and $\lambda$ has order $d$. Substituting $z=\lambda w$ in~\eqref{eq:carry} makes
\[
 \left(\sum_i U_i(\theta)w^i\right)
 \left(\sum_j V_j(\theta)w^j\right)
 \quad\text{divisible by }w^p-1.
\]
Over $\Q(\theta)$, $\PhiC_p(w)$ is irreducible of degree $p-1$. Unless one factor is zero, one is a scalar multiple of $\PhiC_p$ and hence has a constant coefficient family. It follows that every rectangular difference vanishes at $\theta$. Evaluation at $w=1$ gives the vanishing of the total product there. The corresponding original root $\lambda$ is nontrivial because $d>1$.

At the trivial character the $V_j$ masses are equal, so the rectangular differences again vanish. The total mass is $|A||B|=pM$. Fourier inversion proves~\eqref{eq:rectangles}. For a fixed pair $i,j$, two representations in $A_i+B_j$ would give two representations in the original tiling with the same digit carry; hence $A_iB_j$, and its translate $F_{ij}$, are Boolean. Their Fourier coefficients vanish at every order containing $p$; the surviving orders divide $R$, which is exactly invariance under translation by $R$.
\end{proof}

This includes $p=2$. If $R=1$ there are no nontrivial $p$-free characters; if $M=1$ only the trivial-character and fixed-pair arguments are needed. Empty $A_i$ are allowed. Crucially, the periodicity conclusion concerns the \emph{products} $U_iV_j$, not the individual factors.

\section{Boolean stripes and coprime collapse}
At a fixed $x\in\Z_M$, the matrix $(F_{ij}(x))$ is Boolean, has zero additive rectangular differences, and contains exactly $p$ ones. Such a matrix is one complete row or one complete column of ones. To see this, if a row has unequal entries in two columns, the same nonzero difference occurs in every row; Booleanity forces those columns, and then all columns, to be constant. If no row varies, every row is constant. The total $p$ leaves exactly one stripe.

Consequently there are pairwise disjoint stripe sets $X_i,Y_j$ partitioning $\Z_M$ such that
\begin{equation}\label{eq:stripes}
 F_{ij}=X_i+Y_j.
\end{equation}
Each stripe mask is $R$-periodic, since it is uniquely decoded from the periodic matrix. Commutativity also gives the convolution minors
\begin{equation}\label{eq:minors}
 F_{ij}F_{kl}=F_{il}F_{kj}.
\end{equation}

\begin{lemma}[Coprime stripe collapse]\label{lem:collapse}
Let $p$ be prime and let $G$ be a finite abelian group with $p\nmid|G|$. Suppose Boolean stripe masks $X_0,\ldots,X_{p-1},Y_0,\ldots,Y_{p-1}$ partition $G$ and $F_{ij}=X_i+Y_j$ satisfy~\eqref{eq:minors}. Then all $X_i$ are empty or all $Y_j$ are empty.
\end{lemma}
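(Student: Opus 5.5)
The plan is to turn the minor identities into a single product relation over $\Z[\zeta_p][G]$. Then the Frobenius map, which is available because $p\nmid|G|$, should force one of the two stripe families to vanish.

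\textbf{Step 1 (linearise the minors).} Substituting $F_{ij}=X_i+Y_j$ into~\eqref{eq:minors} and cancelling the common terms $X_iX_k$ and $Y_jY_l$ leaves
\[
 (X_i-X_k)(Y_j-Y_l)=0\qquad\text{in }\Z[G]\text{ for all }i,k,j,l.
\]
Equivalently, each product $X_iY_j$ has the additive form $\alpha_i+\beta_j$. For instance, take $\alpha_i=X_iY_0$ and $\beta_j=X_0Y_j-X_0Y_0$.

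\textbf{Step 2 (twisted generating masks).} Fix a primitive $p$th root $\zeta$ and work in $\Z[\zeta][G]$. Put
\[
 P=\sum_i\zeta^iX_i,\qquad Q=\sum_j\zeta^jY_j.
\]
Then $PQ=\sum_{i,j}\zeta^{i+j}(\alpha_i+\beta_j)$. This sum vanishes because $\sum_j\zeta^j=0$. More generally, the same computation gives $P\cdot(\text{any }\zeta\text{-twist of the }Y_j)=0$ whenever the twist coefficients sum to zero.

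\textbf{Step 3 (Frobenius).} Work modulo the ideal $p\Z[\zeta][G]$, which meets $\Z[G]$ in $p\Z[G]$. In characteristic $p$, the $p$th power map is additive. It sends $\zeta^i g$ to $g^p$ and fixes Boolean coefficients, so
\[
 P^p\equiv X^{[p]},\qquad Q^p\equiv Y^{[p]}.
\]
Here $X=\sum X_i$, $Y=\sum Y_j$, and $Z^{[p]}$ denotes the image of $Z$ under $g\mapsto g^p$. Since $p\nmid|G|$, the map $g\mapsto g^p$ is an automorphism of $G$. Applying its inverse, I obtain $XY\equiv0$ and, more sharply, $X\,(Y_j-Y_l)\equiv0 \pmod p$. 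For the sharper form I would multiply $PQ=0$ by $P^{p-1}$ and compare coefficients in the basis $1,\zeta,\dots,\zeta^{p-2}$. By symmetry, $Y\,(X_i-X_k)\equiv0$ as well.

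\textbf{Step 4 (integral lift via positivity), the expected main obstacle.} The congruences must be upgraded to the equality $XY=0$. Since $X+Y=J_G$ with $X,Y$ disjoint, $XY=0$ forces $X$ or $Y$ to be empty: $XY(0)=0$ would say $-X\cap Y=\emptyset$, and iterating this over translates gives the claim. The natural route is to combine three facts. First, there is the exact (not merely modular) identity $(X_i-X_k)(Y_j-Y_l)=0$. Second, the products $X_iY_j$ have nonnegative coefficients bounded by $\min(|X_i|,|Y_j|)$. Third, the trivial-character consequence $(|X_i|-|X_k|)(|Y_j|-|Y_l|)=0$ says that one family has equal sizes. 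In that equal-size family, the masses sum to $|X|$ (or $|Y|$), which is a multiple of $p$ only in limited ways when $p\nmid|G|$.

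I would first try the following: show that every coefficient of $X_iY_j$ is congruent mod $p$ to a quantity forced to lie in $[0,p)$, so that it is an honest integer identity. Then the positivity argument in the style of Coven--Meyerowitz's Frobenius proof of Tijdeman's theorem~\cite[Lemma~3.1]{CM} should conclude. If this bound fails, I would instead apply Steps 2--3 to every permutation of the indices $i$. That produces enough independent congruences to pin down each $X_iY_j$ exactly.
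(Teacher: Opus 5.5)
Your Steps 1--3 are correct and reach the same intermediate congruence as the paper, $XY\equiv0\pmod p$ in $\Z[G]$. The paper gets it more cheaply: summing $(X_i-X_k)(Y_j-Y_l)=0$ over $k,l$ gives $(pX_i-X)(pY_j-Y)=0$, and reducing modulo $p$ finishes, so the $\zeta$-twists are not needed.

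The genuine gap is Step~4. You aim to lift $XY\equiv0$ to the integer identity $XY=0$, and neither suggested route can do this. A priori the coefficients of $XY$ (or of $X_iY_j$) can be as large as $\min(|X|,|Y|)$, so nothing places them in $[0,p)$. Re-indexing the $X_i$ only replaces $P$ by another twist whose $p$th power is again $X^{[p]}$ modulo $p$. It therefore reproduces the congruence $XY\equiv0$ and gives no new information. As written, the proposal ends with a congruence and no argument that turns it into emptiness of a family.

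The missing idea is to feed the partition hypothesis back into the congruence and then apply Frobenius to the untwisted Boolean mask $X$. Since $Y=J_G-X$ and $XJ_G=|X|J_G$, the congruence says $X^2\equiv|X|J_G$. Iterating gives $X^p\equiv|X|^{p-1}J_G\pmod p$. On the other hand, $X^p\equiv X^{[p]}$ by Frobenius. Because $g\mapsto g^p$ is a bijection of $G$ (this is where $p\nmid|G|$ is used), $X^{[p]}$ is again a $\{0,1\}$-mask. A $\{0,1\}$-mask that is constant modulo $p$ is constant. So $X^{[p]}$, and therefore $X$, is $\emptyset$ or $G$, and in the second case $Y=\emptyset$. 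This is the Frobenius-plus-Booleanity step in the style of Tijdeman's theorem that you allude to. Its decisive input, $X^2\equiv|X|J_G$, never appears in your proposal.
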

\begin{proof}
Expand the minors to obtain $(X_i-X_k)(Y_j-Y_l)=0$. With $X=\sum_iX_i$ and $Y=\sum_jY_j$, sum over $k,l$ to get
\[
 (pX_i-X)(pY_j-Y)=0.
\]
Reducing in the commutative group algebra $\F_p[G]$ gives $XY=0$. Since $X+Y=J_G$ and $XJ_G=|X|J_G$, we get $X^2=|X|J_G$, and therefore
\[
 X^p=|X|^{p-1}J_G.
\]
Frobenius, however, gives $X^p=\sum_{x\in X}[px]$. Multiplication by $p$ permutes $G$, so the latter is a permutation of the original Boolean coefficient mask. A Boolean mask constant modulo $p$ is constant as an integer mask. Hence $X$ is empty or all of $G$, as required.
\end{proof}

To apply the lemma here, first descend the $R$-periodic masks from $\Z_M$ to $\Z_R$. For the lift $L$ of masks along reduction modulo $R$, a direct count of the $K=M/R$ lifts of a residue gives
\begin{equation}\label{eq:lift}
 L(f)L(g)=K L(fg).
\end{equation}
Equation~\eqref{eq:minors} therefore descends after cancellation of the nonzero integer $K$. This cancellation occurs in the integral group ring \emph{before} reduction modulo $p$; no division by $K$ in characteristic $p$ is used. The quotient stripes remain a Boolean partition and $(p,R)=1$, so Lemma~\ref{lem:collapse} applies.

Column-only would make $U_iV_j=Y_j$ independent of $i$. Taking masses and using the common positive mass of $V_j$ would make every $|A_i|$ equal, contradicting primary allocation. Thus the orientation is row-only:
\begin{equation}\label{eq:rowonly}
 U_iV_j=X_i,\qquad \sum_iX_i=J_M.
\end{equation}

\section{Actual lower tilings for every phase}\label{sec:phases}
For an arbitrary integer phase vector $h=(h_0,\ldots,h_{p-1})$, put
\[
 C_h=\sum_i T^{Rh_i}U_i.
\]
By~\eqref{eq:rowonly} and product periodicity,
\begin{equation}\label{eq:phase}
 C_hV_j=J_M\qquad\text{for all }j,h.
\end{equation}
The mask $C_h$ has nonnegative integer coefficients. Since $V_j$ is a nonempty Boolean mask, a coefficient of $C_h$ at least two would produce a convolution coefficient at least two, contradicting~\eqref{eq:phase}. Thus $C_h$ is Boolean and~\eqref{eq:phase} is an actual tiling of the strictly smaller period $M$. Translating the complement back also yields $C_hB_j=J_M$. We will use all phases, not just a single chosen quotient.

\section{Strong induction for both original factors}
\begin{theorem}\label{thm:cyclic}
For every $N\ge1$, both factors in an actual cyclic tiling $A+B=\Z_N$ satisfy T2.
\end{theorem}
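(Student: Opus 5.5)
We argue by strong induction on $N$. The case $N=1$ is vacuous. For $N>1$, Lemma~\ref{lem:allocation} shows that every T2 candidate $m=s_1\cdots s_k$ of either factor divides $N$, and the case $k=1$ is the definition of $S_A$ or $S_B$. So fix a candidate with $k\ge2$ for one factor. Choose the distinguished prime $p\mid N$ as follows: use a prime of $N$ not occurring in $m$ if one exists, and otherwise one of the primes of $m$. Swap the factors if necessary so that $B$ owns $\PhiC_p$, and apply the construction of Sections~3--5. This yields, for every phase vector $h$, actual tilings $C_hV_j=J_M$ and $C_hB_j=J_M$ of period $M=N/p<N$. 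The induction hypothesis then gives T2 for both $C_h$ and $B_j$, for all $h$ and $j$.

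The first key step is a \emph{transfer of primary divisors} from $A,B$ to $C_h,B_j$. From $A(X)=\sum_i X^iA_i(X^p)$ and $B(X)=\sum_j X^jB_j(X^p)$, evaluate the fibers at $T=\zeta^p$, where $\zeta$ is a primitive root of order $n$.
\begin{itemize}
\item If $p\nmid n$, the map $\zeta\mapsto\zeta^p$ preserves the order. Lemma~\ref{lem:allocation}, applied to the lower tilings (which have the same $q$-valuations for $q\ne p$), shows that a $q$-primary level owned by $A$ is owned by every $C_h$, and one owned by $B$ is owned by every $B_j$.
\item If $n=p^e$ with $e\ge2$, the argument in Lemma~\ref{lem:products} (irreducibility of $z^p-\theta$) shows that ownership of $\PhiC_{p^e}$ passes to $\PhiC_{p^{e-1}}$ of the corresponding lower factor.
\end{itemize}
The level $\PhiC_p$ owned by $B$ is accounted for by the equal masses of the $B_j$. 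Consequently, the candidate $m$ of $A$ or of $B$ maps to a T2 candidate $m'$ of $C_h$ or of $B_j$: either $m'=m$ when $p\nmid m$, or $m'$ is obtained by lowering the $p$-exponent by one. The induction hypothesis then gives $\PhiC_{m'}\mid C_h$ or $\PhiC_{m'}\mid B_j$ for every $h$ and $j$.

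The second key step is to \emph{recover the original mixed zero}. For $B$ this is direct: $B_j(\theta)=0$ for all $j$ at $\theta=\zeta^p$ gives $B(\zeta)=0$. For $A$ we have
\[
 0=C_h(\theta)=\sum_i \theta^{Rh_i}U_i(\theta)\quad\text{for all integer phase vectors }h,
\]
and we want to conclude $U_i(\theta)=0$ for every $i$, hence $A(\zeta)=\sum_i\zeta^i A_i(\theta)=0$. If $\theta^R\ne1$, vary a single $h_i$ and subtract: this gives $(\theta^R-1)U_i(\theta)=0$, so each $U_i(\theta)=0$. This is the situation in which the $p$-part of $m$ is the mixed ingredient, including higher powers of $p$.

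The main obstacle is the complementary case $\theta^R=1$, where the order of $\theta$ divides $R$ and the phases give no separation. By the choice of $p$, this happens when $m$ is $p$-free. I would handle it using the row-only identity~\eqref{eq:rowonly}. Since $U_iV_j=X_i$ is $R$-periodic and $V_j(\theta)\ne0$ can be arranged through the $B$-side allocation (as $B_j$ does not own the relevant levels), it suffices to show $X_i(\theta)=0$. This in turn would follow from $\sum_iX_i=J_M$ together with T2 for the lower factor $C_0$ along the $p$-free directions. If that fails, I would instead choose $p$ as a prime of $m$ itself, which puts us back in the separable case $\theta^R\ne1$. Verifying that this choice is always compatible with the ownership swap is the delicate point. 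I expect this bookkeeping of the choice of $p$, relative to which factor owns $\PhiC_p$ and which factor carries the candidate, to be the hardest part of the argument.
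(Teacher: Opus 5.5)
Your overall architecture matches the paper: induction through the phase tilings $C_hB_j=J_M$, residue grading for $\PhiC_{p^k}$, phase differences when $\theta^R\ne1$, and direct regrouping for $B$. The gap is in the case you yourself flag, a $p$-free candidate $m$ of $A$, and your preferred choice of $p$ makes this the generic case. Your plan there is to prove $U_i(\theta)=0$ for each $i$, via $V_j(\theta)\ne0$ and $X_i(\theta)=0$. That intermediate claim is false in general. Take $N=30$, $A=\{0,\dots,14\}$, $B=\{0,15\}$, $p=2$, $m=15$. Then $R=15$, $\rho=8$, $A_0=\{0,\dots,7\}$ and $V_0=V_1=\{0\}$. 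Hence $X_0(\theta)=U_0(\theta)=(\theta^8-1)/(\theta-1)\ne0$ at a primitive $15$th root $\theta$, although $A(\zeta)=0$ for $\zeta=\theta^{8}$.

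No separation is needed in this case. The exponent $\rho$ was chosen with $p\rho\equiv1\pmod R$ precisely so that, when $\theta=\zeta^p$ has order dividing $R$, one has $\theta^{Rh_i}=1$ and $\theta^\rho=\zeta$. Hence $C_h(\zeta^p)=\sum_i\zeta^iA_i(\zeta^p)=A(\zeta)$ for every $h$; this is the paper's identity \eqref{eq:pfree}. Lower T2 for $C_0$ then gives $A(\zeta)=0$ at once.

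The same identity is what your transfer step lacks. Applied to $C_h+B_j=\Z_M$, Lemma~\ref{lem:allocation} and the equal $q$-valuations tell you only \emph{how many} $q$-primary levels $C_h$ owns, not \emph{which} ones. You need an inclusion, and $C_h(\theta)=A(\theta^\rho)$ for $\theta$ of order dividing $R$ supplies it: every $p$-free primary zero of $A$ is a zero of every $C_h$. Counting then places exactly the levels of $B$ in every $B_j$. The paper instead gets the $B$-side transfer from the fact that the dilate $pC_0$, with mask $C_0(X^p)$, is a complement of $B$ in $\Z_N$, and compares the two allocations.

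Your fallback of taking $p\mid m$ raises no conflict with the swap. If the factor carrying $m$ owns $\PhiC_p$, it is $B$ and your direct recovery applies; otherwise the $p$-exponent of $m$ is at least $2$, so $\theta^R\ne1$. But the fallback still relies on the $p$-free transfer, so it does not bypass the missing identity. With that identity in hand, $p$ can be fixed once, independently of the candidate, as in the paper.
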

\begin{proof}
Use strong induction on $N$. At $N=1$ both masks are the singleton identity and have no primary zeros. For $N>1$, choose $p\mid N$ and the labeling above. The induction hypothesis applies to every tiling~\eqref{eq:phase}.

\paragraph{The original factor $A$.}
For a $p$-free root $\theta$ whose order divides $M$, dephasing gives
\begin{equation}\label{eq:pfree}
 C_h(\theta)=\sum_i\theta^{\rho i}A_i(\theta)=A(\theta^\rho),
\end{equation}
since $\theta^R=1$ and $(\theta^\rho)^p=\theta$. Every selected $p$-free primary zero of $A$ thus passes to every $C_h$. If the target order $d$ is $p$-free, lower T2 for $C_0$ and~\eqref{eq:pfree}, with $\theta=\zeta^p$ for a primitive original root $\zeta$, give $A(\zeta)=0$.

If the selected order is $p^kd$ with $(p,d)=1$, then $k\ge2$, because $A$ does not own $\PhiC_p$. The identity
\[
 \PhiC_{p^k}(X)=\PhiC_{p^{k-1}}(X^p)
\]
and monic division by exponent residues modulo $p$ show that every $A_i$ is divisible by $\PhiC_{p^{k-1}}$. Explicitly, write the integer quotient of $A(X)$ by the displayed monic divisor as $\sum_iX^iQ_i(X^p)$ and compare residue classes. Thus every $C_h$ inherits the lower $p$-primary zero as well as the selected $p$-free zeros.

For a primitive root $\zeta$ of order $p^kd$, let $\theta=\zeta^p$, of order $p^{k-1}d\mid M$. Lower T2 gives $C_h(\theta)=0$ for every $h$. Comparing the zero phase with the phase changing only coordinate $i$ by one yields
\[
 \theta^{\rho i}(\theta^R-1)A_i(\theta)=0.
\]
Here $\theta^R$ has order $p^{k-1}>1$, so every $A_i(\theta)=0$. Regrouping gives $A(\zeta)=\sum_i\zeta^i A_i(\zeta^p)=0$. This is recovery of the original upper-$p$ mixed zero, not a claim that a single quotient preserves it.

\paragraph{The original factor $B$.}
We use the common-complement inheritance and lifting argument of Coven and Meyerowitz~\cite[Lemma~2.5]{CM}.
At zero phase, $C_0B_j=J_M$ for every $j$. Hence the dilated set $pC_0=\{pc:c\in C_0\}\subseteq\Z_{pM}$ is an actual complement of original $B$. Here dilation has mask $C_0(X^p)$; it is not scalar multiplication of coefficients by $p$. All lower $B_j$ have T2 by induction and share the same primary allocation, since their actual complement is the same $C_0$.

Every $p$-free primary zero of $B$ belongs to every $B_j$: compare exact allocation in $B+pC_0=\Z_{pM}$ with that in $B_j+C_0=\Z_M$. At a $p$-free prime-power order, the polynomial of $pC_0$ vanishes precisely when that of $C_0$ does, because raising a primitive root to its $p$th power preserves its order. Allocation then forces the zero into $B_j$.

If $B$ has a higher primary zero $p^k$, $k\ge2$, residue grading likewise puts $p^{k-1}$ in every $B_j$. For a $p$-free target $d$, lower T2 makes every $B_j(\zeta^p)$ zero and hence $B(\zeta)=0$. For a target $p^kd$ with $k\ge2$, apply the same argument at the lower target $p^{k-1}d$. For $k=1,d>1$, use the inherited lower target $d$. The remaining case $k=1,d=1$ is the already known $\PhiC_p$ zero. These cases exhaust all T2 requirements of both original factors by Lemma~\ref{lem:allocation}.
\end{proof}

\section{From integer tilings to cyclic tilings}
The periodicity theorem is classical. Coven and Meyerowitz~\cite[Lemma~1.2]{CM} record its antecedents in work of Haj\'os, de Bruijn, and Swenson, followed by Newman~\cite{Newman}. We recall Newman's finite-window argument to make the passage to a cyclic tiling explicit.

Normalize the integer tile so that $\min A=0$, and put $L=\max A$. Equation~\eqref{eq:integer} implies that $c$ is Boolean: one term bounds $c(s)$ by $1$ for every $s$. If $L=0$ the theorem is immediate. Otherwise the terms $0,L\in A$ make the recurrence deterministic in both directions: $c(t)$ is determined by its preceding $L$ bits, and the earliest bit in a window of length $L+1$ is determined by the following $L$ bits.

There are finitely many $L$-bit states. A repeated state gives a positive shift $N_0$ preserving the whole bi-infinite sequence by forward and backward determinism. Replace $N_0$ by a multiple $N>L$ if desired; it is still a period. Reduction of $A$ modulo $N$ is then injective, and the residue support $B$ of $c$ gives an actual cyclic tiling. With $N>L$, the canonical ordinary polynomial of the reduced $A$ is exactly the original polynomial. Theorem~\ref{thm:cyclic} proves T2. Translating back proves Theorem~\ref{thm:integer} for arbitrary finite integer sets. The Lean implementation chooses a support-exceeding period in this way.

\appendix
\section{Formal statement and reproducibility}\label{app:formal}
The final declaration is
\begin{center}
\code{FullCovenMeyerowitz.integer\_tile\_cyclotomic\_product}.
\end{center}
Its explicit Lean type quantifies over $E:\code{Finset}\ \mathbb N$, $c:\Z\to\mathbb N$, the literal equations~\eqref{eq:integer}, a nonempty finite set $S$ of primes, and arbitrary positive exponents $e(q)$. It assumes $\PhiC_{q^{e(q)}}\mid\sum_{a\in E}X^a$ for every $q\in S$. Its conclusion is
\[
 \operatorname{cyclotomic}\!\left(\prod_{q\in S}q^{e(q)}\right)\Z
 \ \mid\ \sum_{a\in E}X^a.
\]
There is no hidden prime-count, period, cardinality, or phase-closure premise. The additional declaration \code{arbitrary\_integer\_tile\_T2} derives nonemptiness and minimum-normalizes arbitrary finite integer sets.

\small
\begin{center}
\begin{tabular}{>{\raggedright\arraybackslash}p{0.31\textwidth}>{\raggedright\arraybackslash}p{0.63\textwidth}}
\textbf{Argument} & \textbf{Principal Lean modules}\\\hline
Primary allocation & \code{PrimaryAllocation}\\
Relative cyclotomic fields & \code{RelativeCyclotomic}, \code{RamifiedCyclotomic}\\
Actual product identities & \code{ActualTilingSpectra}, \code{FiberProductBoolean}\\
Boolean stripe partition & \code{BooleanStripes}\\
Integral descent and collapse & \code{PeriodicDescent}, \code{StripeCollapse}, \code{PeriodicMatrixCollapse}\\
All-phase smaller tilings & \code{ActualPrimeReduction}, \code{ActualIntegerReduction}\\
Original-factor recovery & \code{T2Induction}, \code{CommonComplementLift}\\
Unrestricted cyclic induction & \code{CyclicT2Completion}\\
Integer transfer & \code{IntegerTilingPeriodicity}, \code{IntegerCyclicReduction}, \code{IntegerSetNormalization}, \code{IntegerT2Completion}\\
Exposed final statements & \code{FullCovenMeyerowitz}\\\hline
\end{tabular}
\end{center}
\normalsize

The formalization uses Lean~4~\cite{Lean4} and Mathlib~\cite{Mathlib}. The frozen project uses Lean 4.23.0 and Mathlib commit
\begin{center}\code{37df177aaa770670452312393d4e84aaad56e7b6}.\end{center}
After installing Elan, enter the repository's \code{formal} directory and run
\begin{verbatim}
lake exe cache get
sh ./verify_full_t2.sh
\end{verbatim}
The verifier checks three SHA256 source manifests, recompiles 35 mathematical modules in dependency order, recompiles a literal-function cyclic semantics probe, and runs 22 guarded axiom checks. An unexpected final axiom list fails the audit. Plain \code{lake build} is insufficient because the frozen original default target is only \code{StripeCollapse}.

The reported final axioms are exactly \code{propext}, \code{Classical.choice}, and \code{Quot.sound}. The repository includes the pinned sources, verification logs, and a GitHub Actions workflow that executes the complete verifier. The formalization covers T2 necessity; T1 necessity and T1+T2 sufficiency are not included in this development.

\paragraph{Acknowledgements.}
Generative AI tools assisted with proof development, manuscript preparation, and Lean formalization.

\paragraph{Availability and license.}
The manuscript source and the companion formal development are maintained together at \url{https://github.com/infinityscroll/coven-meyerowitz-t2}. This manuscript, including its LaTeX source and compiled PDF, is licensed under the Creative Commons Attribution 4.0 International license (CC BY 4.0): \url{https://creativecommons.org/licenses/by/4.0/}. This manuscript license does not change the licenses of Lean, Mathlib, or other upstream dependencies.

\end{document}